\newtheorem{theorem}{Theorem}[section]
\theoremstyle{definition}
\newtheorem{definition}[theorem]{Definition}
\newtheorem{remark}{Remark}
\newcommand{\ep}{\varepsilon}
\newcommand{\Om}{\Omega}
\newcommand{\HH}{H^1(\Om) \times L^2(\Om)}
\newcommand{\re}{\mathbb{R}}
\newcommand{\HO}{H^1(\Om)}
\newcommand{\LO}{L^2(\Om)}
\title[Existence of weak solutions to the Cauchy problem]
      {Existence of weak solutions to the Cauchy problem of a semilinear wave equation with supercritical interior source and damping}
\author[Lorena Bociu and Petronela Radu]{}
\subjclass{Primary: 35L15, 35L70; Secondary: 35L05}
 \keywords{wave equations, damping and source terms, weak solutions, energy identity}
 \email{lvb9b@virginia.edu}
 \email{pradu@math.unl.edu}
\begin{document}
\maketitle

\centerline{\scshape Lorena Bociu }
\medskip
{\footnotesize
 \centerline{ University of Nebraska-Lincoln}
   \centerline{Lincoln, NE 68588-0130, USA}
}
\medskip

\centerline{\scshape Petronela Radu }
\medskip
{\footnotesize
 \centerline{ University of Nebraska-Lincoln}
   \centerline{Lincoln, NE 68588-0130, USA}
}

\bigskip

 \begin{abstract}
In this paper we show existence of finite energy solutions for the
Cauchy problem associated with a semilinear wave equation with
interior damping and supercritical source terms. The main contribution consists in dealing with super-supercritical source terms (terms of the order of $|u|^p$ with $p\geq 5$ in $n=3$ dimensions), an open and highly recognized problem in the literature on nonlinear wave equations.
\end{abstract}

\section{Introduction}
Consider the Cauchy problem:
   \begin{equation}\tag{SW}
   \left\{\begin{array}{l}
    u_{tt}-\Delta u + f(u) +g(u_t) = 0 \text{   a.e. } (x,t)\in \re^n
    \times [0,\infty);\\
    (u,u_t)|_{{}_{t=0}} = (u_{{}_0},u_{{}_1}) ,  \text{   a.e. }  x\in \re^n.
   \end{array} \right .
  \end{equation}

We are interested in the existence of weak solutions to (SW) on the
finite energy space $H^1(\re^n) \times L^2(\re^n)$. We will work
with the following notation : $| \cdot|_{p,\Omega} $ denotes the
$L^p(\Omega)$ norm, while for the $L^2$ norm we simply use $|
\cdot|_{\Omega}$; when there is no danger of confusion we simplify
the notation $| \cdot|_{p,\Omega} $ to $| \cdot|_{p}$.

For the sake of exposition, we will focus on the most relevant case of
dimension $n=3$, but the analysis can be adapted to any other value
of $n$. In this case, we classify the interior source $f$ based
on the criticality of the Sobolev's embedding $H^1(\re^3) \to
L^6(\re^3)$ as follows: (i) \textbf{subcritical}: $1 \leq p < 3$ and
\textbf{critical}: $p = 3$. In these cases, $f$ is locally Lipschitz
from $H^1(\re^3) $ into $ L^2(\re^3)$; (ii) \textbf{supercritical}:
$ 3 < p < 5$. For this exponent $f$ is no longer locally Lipschitz, but the potential
well energy associated with $f$ is still well defined on the finite
energy space; (iii) \textbf{super-supercritical}: $ 5 \leq p < 6$.
The source is no longer within the framework of potential well
theory, due to the fact that the potential energy may not be defined
on the finite energy space.

\subsection{Assumptions} Throughout the paper we will impose the following conditions on the source and damping terms:

\noindent\textbf{(A$_{\textbf{g}}$)} $g$ is increasing and continuous with $g(0)=0$. In addition, the following
growth condition  at infinity holds: there exist positive constants
$ \displaystyle l_m, L_m $ such that for $ \displaystyle |s|
> 1$ we have $\displaystyle l_m
|s|^{m+1} \leq g(s)s \leq L_m |s|^{m+1} $ with   $ \displaystyle m
\geq 0 $.\\

\noindent \textbf{(A$_{\textbf{f}}$)} $f\in C^1(\re)$ and the following growth
condition is imposed on $f$:
\[
|f'(u)|\leq C|u|^{p-1} \text{ for } |s| > 1
\]
where $p\in [1,6)$ satisfies either (a)$\,1<p \leq 3,\, m \geq 0\,\,\,\,$ or (b)$\,
p+\displaystyle\frac{p}{m} < 6/{(1+2\ep)}$ for some $\varepsilon>0$, where $m>0$ is the growth exponent
from (A$_g$).

\begin{remark}
Note that the Assumption (A$_f$) allows for both types of
supercriticality. Also, (A$_f$) guarantees that $f$ is locally Lipschitz from $H^{1-
\ep}(\re^3) \to L^{\frac{m+1}{m}}(\re^3)$.
\end{remark}

\begin{definition}\label{ws}
     Let $\Omega_T := \Omega \times(0,T), \, T>0 $, where $\Omega \subset \re^3$
is an open
connected set with smooth boundary $\partial \Omega$. Let $f$ and $g$ be two real valued functions $f$ and $g$ which satisfy (A$_f$) and (A$_g$), and further
suppose that
   $ u_0 \in H_0^1(\Omega)\cap L^{p+1}(\Omega)$  and  $u_1 \in L^2(\Omega)
     \cap L^{m+1}(\Omega)$.

   A weak solution on $\Omega_T$ of the boundary value problem
   \begin{equation}\tag{SWB}
  \left\{\begin{array}{l}
   u_{tt}-\Delta u+f(u)+ g(u_t)=0 \text{  in } \Omega_T;\\
   (u,u_t)|_{t=0}=(u_0,u_1)  \text{  in } \Omega;\\
   u=0 \text{ on } \partial \Omega \times (0,T).
   \end{array} \right .
   \end{equation}
   is any function $u$ satisfying
   \[
        u\in C(0,T; H_0^1(\Omega)) \cap L^{p+1}(\Omega _T), \quad
        u_t \in L^2(\Omega_T) \cap L^{m+1}(\Omega _T),
   \]
   and
   \begin{multline*}
      \int_{\Omega _T} \Big ( u(x,s)\phi_{tt}(x,s)+\nabla u(x,s) \cdot
\nabla
      \phi(x,s) +f(x,s,u)\phi(x,s) \\ +g(x,s,u_t)\phi(x,s)\Big)
      \, dx ds =\int_{\Omega} \Big(u_1(x)\phi(x,0)-u_0(x)\phi_t(x,0)\Big)
dx
         \end{multline*}
   for every $\phi \in C_c^{\infty}(\Omega \times (-\infty, T))$.
   \end{definition}

\begin{remark} A weak solution for the Cauchy problem (SW) is defined by taking in the above definition $\Omega=\re^n$ with no boundary conditions.
\end{remark}

\subsection{Relationship to previous literature. Significance of results.}

Semilinear wave equations with interior damping-source interaction
have attracted a lot of attention in recent years. In the case of
subcritical source $f$, local existence and uniqueness of solutions
are standard and they follow from monotone operator theory \cite{B}.
In \cite{GT}, the authors considered the case of polynomial damping
and source, i.e. $g(u_t) = |u_t|^{m-1}u_t$ and $f(u) = |u|^{p-1}u$
and showed that if the damping is strong enough ($ m \geq p$), the
solutions live forever, while in the complementary region $ m < p$,
the solutions blow-up in  finite time. For supercritical interior
sources, \cite{BLR}, \cite{fer} and \cite{PR2} exhibited existence
of weak solutions for a bounded domain $\Omega$, under the
restriction $p < 6m / (m+1)$, while \cite{STV} obtained the same
results for $ \Om = \mathbb R^3$, and compactly supported initial
data, with $p < 6m / (m+1)$. In this case, it was shown additionally
by \cite{PR1} that if the interior damping is absent or linear, the
exponent $p$ may be supercritical, i.e. $p < 5$; also, in \cite{PR1} the initial
data may not be compactly supported. The case of super-supercritical
sources on a bounded domain was analyzed and resolved recently in
\cite{B_exist}, \cite{BL_uniq}, \cite{BL_blowup}. The authors
considered the wave equation with interior and boundary damping and
source interactions, and proved existence and uniqueness of weak
solutions. Moreover, they provided complete description of
parameters corresponding to global existence and blow-up in finite
time. We will provide more details on these results in the next
section.

Our paper provides existence of solutions to wave equations on
$\re^3$ for the case of super-supercritical sources. The method used
will also provide an alternative proof in the case of supercritical
(and below) interior sources. Thus our paper extends the known
existence results to the super-supercritical case (we include the
dark shaded regions $ 5 \leq p < 6$). We summarize our results and
improvements over previous literature with the following
illustration:
\begin{center}
\includegraphics[width=2.5in]{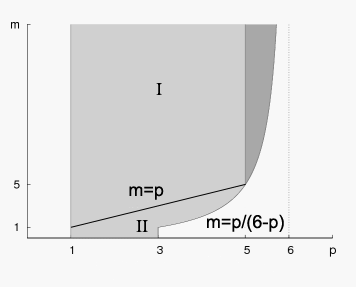}
\end{center}
Note that for the range of exponents $m\geq p$ (region I above) one expects global existence of solutions, while for $m < p$ the solutions may blow up in finite time (according to the preliminary results of \cite{GT, BLR,B_exist,BL_blowup} obtained on bounded domains).
\section{Preliminaries}
We include in this section the following theorems which were proved
in \cite{PR1} and which will be used in the proof of our main result.

  \begin{theorem}
{\bf(Existence and uniqueness of solutions for dissipative
    wave equations with Lipschitz source terms)}
\label{T:existLD1}
      Let $ \Omega \subset \re^3$ be a bounded domain with smooth boundary
 $\partial  \Omega$, and let the functions $f$ and $g$ satisfy
assumptions
{\rm  (A$_f$), (A$_g$),}
 where $f$ is globally Lipschitz. Let $u_0, u_1 \in
 H_0^1( \Omega) \times L^2( \Omega)$ and $T>0$ arbitrary. Then the boundary value problem
 \begin{equation}\tag{SWB}
   \left\{\begin{array}{l}
    u_{tt}-\Delta u + f(u) +g(u_t) = 0 \text{   a.e. } (x,t)\in  \Omega
    \times [0,\infty);\\
    u(x,t)=0,  \text{   a.e. } (x,t)\in \partial  \Omega  \times [0,\infty);\\
    (u,u_t)|_{{}_{t=0}} = (u_{{}_0},u_{{}_1}) ,  \text{   a.e. }  x\in  \Omega.
   \end{array} \right .
  \end{equation}
  admits a unique solution $u$ on the time interval $[0, T]$ in
the sense of the Definition $\ref{ws},$ i.e.,
 \[
        u\in C(0,T; H_0^1( \Omega)) \cap L^{p+1}( \Omega _T), \quad
        u_t \in L^2( \Omega_T) \cap L^{m+1}( \Omega _T).
   \]
\end{theorem}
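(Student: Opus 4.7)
The plan is to reformulate (SWB) as a first-order abstract Cauchy problem in the Hilbert space $\mathcal{H} = H_0^1(\Omega) \times L^2(\Omega)$ (with the natural inner product coming from the Dirichlet form plus the $L^2$ product) and invoke nonlinear semigroup theory in the style of Barbu/Brezis/Showalter. Specifically, setting $U = (u,v)^T$ with $v = u_t$, I would write the equation as $U_t + \mathcal{A}U = \mathcal{F}(U)$, where $\mathcal{A}(u,v) = (-v,\, -\Delta u + g(v))$ is the (nonlinear) dissipative part and $\mathcal{F}(u,v) = (0,\, -f(u))$ is the source perturbation. Because $f$ is globally Lipschitz $H_0^1 \to L^2$ by hypothesis, $\mathcal{F}$ is globally Lipschitz on $\mathcal{H}$, so the whole game is to show that $\mathcal{A}$ is $m$-accretive on $\mathcal{H}$; once that is in hand the Kato–Barbu theorem on Lipschitz perturbations of $m$-accretive operators yields a unique generalized (mild) solution $U \in C([0,T];\mathcal{H})$ for every $T>0$.

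The core technical step is the maximal monotonicity of $\mathcal{A}$. Monotonicity is immediate from the identity $\langle \mathcal{A}U_1 - \mathcal{A}U_2,\, U_1 - U_2\rangle_{\mathcal{H}} = \int_\Omega (g(v_1)-g(v_2))(v_1-v_2)\,dx \ge 0$, using that $g$ is monotone. For the range condition one must solve, for some $\lambda > 0$ and given $(h_1,h_2) \in \mathcal{H}$, the system $\lambda u - v = h_1,\ \lambda v - \Delta u + g(v) = h_2$. Eliminating $v = \lambda u - h_1$, this reduces to the stationary problem
\[
-\Delta u + \lambda^2 u + g(\lambda u - h_1) = h_2 + \lambda h_1 \quad \text{in } \Omega,\qquad u|_{\partial\Omega}=0,
\]
which I would solve by a variational/monotonicity argument: the operator $u \mapsto -\Delta u + \lambda^2 u + g(\lambda u - h_1)$ is coercive, bounded, hemicontinuous and monotone from $H_0^1 \cap L^{m+1}$ into its dual, so surjectivity follows from Minty–Browder. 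Some care is needed because $g$ may grow like $|s|^m$, hence $u$ is sought in $H_0^1 \cap L^{m+1}$; one compensates the mismatch with $\mathcal{H}$ by a truncation/approximation of $g$ (e.g., Yosida regularization $g_\varepsilon$) and passing to the limit using monotonicity.

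Having $\mathcal{A}$ $m$-accretive and $\mathcal{F}$ globally Lipschitz, the semigroup theory delivers a unique global mild solution, and standard density arguments (taking smooth data in $D(\mathcal{A})$, deriving the energy identity
\[
\tfrac{1}{2}\bigl(|u_t(t)|_\Omega^2 + \|u(t)\|_{H_0^1}^2\bigr) + \int_0^t\!\!\int_\Omega g(u_t)u_t\,dx\,ds = \tfrac{1}{2}\bigl(|u_1|_\Omega^2 + \|u_0\|_{H_0^1}^2\bigr) - \int_0^t\!\!\int_\Omega f(u)u_t\,dx\,ds,
\]
and passing to the limit) upgrade the mild solution to a weak solution in the sense of Definition \ref{ws}. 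The Lipschitz bound on $f$ together with a Gronwall estimate on the energy precludes blow-up, giving existence on every $[0,T]$. The main obstacle I anticipate is precisely the $L^{m+1}$-regularity of $u_t$ inside the energy identity: the monotone term $\int g(u_t)u_t$ must be interpreted carefully when $u_t$ is only an $\mathcal{H}$-valued function, and the usual trick is to first establish the identity at the approximate ($g_\varepsilon$, smooth data) level, extract a weak-$*$ limit of $g_\varepsilon(u_t^\varepsilon)$ in $L^{(m+1)/m}(\Omega_T)$ via the uniform energy bound, identify the limit with $g(u_t)$ using the Minty monotonicity trick, and thereby deduce $u_t \in L^{m+1}(\Omega_T)$.
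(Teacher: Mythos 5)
Your argument is essentially the paper's own route: Theorem \ref{T:existLD1} is not proved in this paper at all but quoted from \cite{PR1}, and the proof there rests precisely on the monotone-operator/nonlinear-semigroup theory of Barbu \cite{B} that you invoke --- $m$-accretivity of $\mathcal{A}(u,v)=(-v,\,-\Delta u+g(v))$ on $H_0^1(\Omega)\times L^2(\Omega)$, the resolvent equation handled by Minty--Browder with a Yosida/truncation approximation of $g$, a globally Lipschitz perturbation $(0,-f(u))$, and the energy identity plus the monotonicity trick to identify $g(u_t)$ and extract $u_t\in L^{m+1}(\Omega_T)$. You correctly flag the only delicate points (the $L^{m+1}$ mismatch in the stationary problem for large $m$, and upgrading the mild solution to a weak one), so I see no gap; the single item deserving a line in a written version is that uniqueness in the class of Definition \ref{ws} requires verifying that any weak solution with that regularity obeys the energy identity and hence coincides with the semigroup solution, via the standard Gronwall-plus-monotonicity comparison of two solutions.
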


The finite speed of propagation property is known to hold for wave equations
with nonlinear damping and/or with source terms {\it of good sign}, i.e.
their contribution to the energy of the system is decretive. The following
theorem states that the property remains true for source terms of {\it arbitrary} sign,
as long as they are Lipschitz (for a proof see \cite{PR1}).
\begin{theorem}
{\bf (Finite speed of propagation)}
\label{P:finitespeed}
    Consider the problem (SWB) under the hypothesis of Theorem
$\ref{T:existLD1}.$ Then

   $(1)$ if the initial data $u_0, u_1$ is compactly supported inside the
    ball $B(x_0, R)$ $ \subset  \Omega ,$ then $u(x,t)=0$ for all points $x\in \Omega$ outside B($x_0$, $R+t$);

   $(2)$ if $(u_0, u_1), (v_0, v_1)$ are two pairs of initial data with
    compact support, with the corresponding solutions $u(x,t)$,
    respectively $v(x,t)$, and $u_0(x)=v_0(x)$ for $x\in$ B($x_0$,$R)
\subset  \Omega$, then
    $u(x,t)=v(x,t)$ inside  B($x_0$, $R-t$) for any $t<R$.
   \end{theorem}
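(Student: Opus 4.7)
My plan is to prove both parts via the classical local energy method on backward light cones, leveraging two structural properties: the monotone damping $g$ contributes with a favorable sign, and the globally Lipschitz source $f$ can be absorbed via a Gronwall-type inequality. Part (1) will follow from part (2) by comparison with the zero solution (using $g(0) = 0$ from (A$_{\textbf{g}}$) and the standard normalization $f(0) = 0$), so I focus on the comparison statement (2).

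Set $w = u - v$, so that
\[
w_{tt} - \Delta w + \bigl(f(u) - f(v)\bigr) + \bigl(g(u_t) - g(v_t)\bigr) = 0,
\]
with $w \equiv 0$ and $w_t \equiv 0$ on $B(x_0, R)$ at $t = 0$. For $s \in [0, R)$ and $K_s := B(x_0, R-s)$, introduce the localized energy
\[
F(s) = \tfrac{1}{2} \int_{K_s} \bigl(|w_t|^2 + |\nabla w|^2 + |w|^2 \bigr)\, dx,
\]
so that $F(0) = 0$. The lower-order term $|w|^2$ is included precisely so that the Lipschitz source contribution can be absorbed into $F$. Differentiating $F$ by Reynolds' transport theorem and integrating by parts, $F'(s)$ splits into a bulk integral (into which the equation for $w$ is fed) plus a boundary integral on $\partial K_s$ involving $(\partial_\nu w)\, w_t$ and the geometric contribution $-\tfrac{1}{2}(|w_t|^2 + |\nabla w|^2 + |w|^2)$ coming from the sphere shrinking at unit speed.

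Two observations then close the estimate. On $\partial K_s$, Cauchy--Schwarz yields $|(\partial_\nu w)\, w_t| \le \tfrac{1}{2}(|\nabla w|^2 + |w_t|^2)$, which is precisely what cancels the geometric term---this sharp balance is the reason the propagation speed is exactly $1$. In the bulk, monotonicity of $g$ makes the damping contribution $-(g(u_t)-g(v_t))(u_t - v_t)$ non-positive, while the global Lipschitz bound on $f$ yields $|(f(u) - f(v))\, w_t| \le \tfrac{L}{2}(|w|^2 + |w_t|^2)$, and the cross term $w\,w_t$ is controlled similarly. Assembling these estimates produces $F'(s) \le C\, F(s)$ on $[0, R)$, and since $F(0) = 0$, Gronwall forces $F \equiv 0$, i.e.\ $u \equiv v$ throughout the backward cone. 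This proves (2).

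For part (1), I would apply (2) to $u$ and the zero solution on any backward cone with vertex at a point $(y, t)$ satisfying $|y - x_0| > R + t$; the base of such a cone at $s = 0$ is the ball $B(y, t)$, which lies in $\re^3 \setminus B(x_0, R)$ and therefore has empty intersection with the supports of $u_0, u_1$, so the initial data coincide on that ball and (2) yields $u(y, t) = 0$. The main technical delicacy I anticipate is the boundary accounting on $\partial K_s$: the cancellation between the flux $(\partial_\nu w)\, w_t$ and the geometric shrinkage term is tight, so any loss in the Cauchy--Schwarz step propagates into the wrong propagation speed. Beyond that, the argument is a routine Gronwall closure on the specifically chosen energy $F$.
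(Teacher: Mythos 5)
The paper itself contains no proof of this theorem --- it defers entirely to \cite{PR1} --- so there is no in-paper argument to compare against line by line. Your proposal is the classical backward-cone energy argument, and it is precisely the mechanism the paper alludes to when it emphasizes that finite speed of propagation survives for source terms of \emph{arbitrary sign} provided they are Lipschitz: nowhere do you use a sign condition on $f$, only the global Lipschitz bound $|f(u)-f(v)|\leq L|w|$, absorbed through the lower-order $|w|^2$ term in $F$ and closed by Gronwall. The core computation is sound: the flux term $(\partial_\nu w)\,w_t$ on $\partial K_s$ is exactly dominated by the geometric shrinkage term (which pins the speed at $1$), the monotone damping contributes $-(g(u_t)-g(v_t))w_t\leq 0$ and drops out, and the reduction of (1) to (2) via the zero solution is correct, with one small fix: take the base ball $B(y,\rho)$ with $t<\rho<|y-x_0|-R$ (possible since the inequality $|y-x_0|>R+t$ is strict), because with base radius exactly $t$ the conclusion ball $B(y,\rho-t)$ degenerates to a point at time $t$.

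Two caveats, the first of which is the one genuine gap. The differentiation of $F(s)$ and the integration by parts presuppose more regularity than the solution class of Theorem \ref{T:existLD1} provides: the solutions there are weak, with $u_t\in L^2(\Omega_T)\cap L^{m+1}(\Omega_T)$, so $w_{tt}$ is only a distribution, boundary traces of $\nabla w$ and $w_t$ on the shrinking spheres are not immediately defined, and $F$ is not a priori absolutely continuous in $s$. Since the theorem is stated precisely for these weak solutions, the formal computation must be justified --- the standard repair, consistent with how the solutions of Theorem \ref{T:existLD1} are built via monotone operator theory \cite{B}, is to run your estimate on strong solutions (initial data in the domain of the generator), where the pointwise identity and the transport-theorem differentiation are legitimate, and then pass to the limit by density and continuous dependence; alternatively one mollifies in time. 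You should state this step explicitly rather than differentiate $F$ directly. Second, the normalization $f(0)=0$, which you honestly flag, is not contained in (A$_f$), yet part (1) is false without it: if $f(0)\neq 0$ the zero function is not a solution, and a spatially constant ODE profile emanates from zero data everywhere, so the vanishing outside $B(x_0,R+t)$ fails. It must be read as implicit in the theorem's statement, and your proof should record it as a standing hypothesis rather than a convention.
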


We conclude this section by stating the following result which appears
in  \cite{B_exist, BL_uniq} and whose analog on $\re^3$ we will prove in the next section.
\begin{theorem}\label{t:1}{\bf (Local existence and uniqueness in
the case of interior and boundary damping-source interactions)}
Consider the wave equation on an open bounded domain $\Om \subset
\mathbb{R}^3$
\begin{equation}\label{1.1}
\begin{cases}
                                  u_{tt} -\Delta u+ f(u)+g(u_t)=  0 $ in $  \Om
                                             \times [0, \infty)\\
                                 u =0$
                                          on $\partial\Omega  \times [0,\infty)\\
                                  u(0)=u_0 \in H_0^1(\Om)$ and $u_t(0)= u_1 \in
                                  L^2(\Om)
                       \end{cases}\end{equation}
under assumptions (A$_f$), (A$_g$) above. If $ \displaystyle p > 3 $, we
additionally assume that $\displaystyle f \in C^2(\mathbb{R})$, and
$\displaystyle |f''(s)| \leq C |s|^{p-2} $.
\noindent Then there exists a \textbf{local in time unique} weak
solution $u \in C[(0, T_M),H_0^1(\Omega)]\cap C^1[(0,T_M),L^2(\Omega)]$, where the maximal time of existence
$T_M>0 $depends on initial data $|(u_0, u_1)|_{H_0^1(\Om) \times L^2(\Om)}$,
and $l_m$ given by (A$_g$).
 \end{theorem}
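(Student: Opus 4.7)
The plan is to construct solutions by truncating the super-supercritical source into globally Lipschitz approximants, applying Theorem \ref{T:existLD1} to each truncation to produce a sequence of finite-energy approximate solutions, and then deriving $N$-uniform a priori bounds on a short time interval so that the limit is a bona fide weak solution. For each $N\in\na$, introduce a globally Lipschitz cutoff $f_N\in C^1(\re)$ coinciding with $f$ on $\{|s|\leq N\}$ and linearly extended outside $\{|s|\leq N+1\}$; each $f_N$ still satisfies (A$_f$) with an $N$-dependent Lipschitz constant, so Theorem \ref{T:existLD1} yields a unique $u_N\in C(0,T;\HO)$ with $u_{N,t}\in L^2\cap L^{m+1}$ and the standard energy identity.

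The crux is an $N$-independent bound on a short interval $[0,T_M]$. Testing the equation against $u_{N,t}$ gives
\[
E_N(t) + \int_0^t\int_\Om g(u_{N,t})u_{N,t}\,dx\,d\tau = E_N(0) - \int_0^t\int_\Om f_N(u_N)u_{N,t}\,dx\,d\tau,
\]
with $E_N(t)=\tfrac12|u_{N,t}|_\Om^2+\tfrac12|\nabla u_N|_\Om^2$. Using $|f_N(s)|\leq C(1+|s|^p)$, H\"older duality in the $L^{m+1}/L^{(m+1)/m}$ pairing (for which $u_{N,t}$ and $f_N(u_N)$ are the natural partners), and the Sobolev embedding $H^{1-\ep}(\Om)\hookrightarrow L^{6/(1+2\ep)}(\Om)$, one obtains
\[
\Big|\int_\Om f_N(u_N)u_{N,t}\,dx\Big| \leq \delta\int_\Om g(u_{N,t})u_{N,t}\,dx + C_\delta\bigl(1+|u_N|_{H^{1-\ep}}^{p(m+1)/m}\bigr),
\]
which is legitimate precisely because condition (b) of (A$_f$) requires $p(m+1)/m<6/(1+2\ep)$. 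Absorbing the damping term, interpolating $|u_N|_{H^{1-\ep}}$ between $L^2$ and $H^1$, and applying Gronwall on a short interval yields $\sup_{[0,T_M]}E_N(t)\leq C$, with $T_M$ depending only on $|(u_0,u_1)|_{\HH}$ and $l_m$.

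Passing to the limit, an Aubin--Lions compactness extraction produces $u_N\to u$ weak-$*$ in $L^\infty(0,T_M;\HO)$ and strongly in $L^q(\Om\times(0,T_M))$ for every $q<6$; the local Lipschitz property of $f\colon H^{1-\ep}\to L^{(m+1)/m}$ recorded in the remark after (A$_f$) upgrades this to $f(u_N)\to f(u)$ in the dual of the damping space, permitting passage to the limit in the weak formulation, while monotonicity of $g$ handles the damping via a standard Minty-type identification. For uniqueness, given two solutions $u,v$, the bound $|f''(s)|\leq C|s|^{p-2}$ yields $|f(u)-f(v)|\leq C(|u|^{p-1}+|v|^{p-1})|u-v|$; applying the same H\"older/Sobolev duality to the difference $w=u-v$ produces an energy-type inequality that closes via Gronwall on $[0,T_M]$. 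The main obstacle throughout is the super-supercritical regime $p\geq 5$, in which $f(u)$ fails to lie in $L^2$ over $\HO$, so every estimate must be routed through the damping via the $L^{m+1}/L^{(m+1)/m}$ duality combined with the slight gain $H^{1-\ep}\hookrightarrow L^{6/(1+2\ep)}$ --- which is precisely what condition (b) of (A$_f$) is engineered to furnish.
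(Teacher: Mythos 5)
First, a point of orientation: the paper does not actually prove this theorem --- it is imported from \cite{B_exist,BL_uniq}, and only its \emph{existence} mechanism is reproduced (for the analog on bounded domains feeding into the $\re^3$ result) in Section 3.1. Measured against that, your existence argument is essentially the paper's own scheme: truncate $f$ to a globally Lipschitz $f_N$ (the paper multiplies by a cutoff $\eta$ supported in $\{|s|\le 2n\}$ rather than extending linearly, but both truncations satisfy $|f_N'(s)|\le C(1+|s|^{p-1})$ uniformly in $N$, which is what gives the $N$-uniform local Lipschitz property $H^{1-\ep}(\Om)\to L^{\frac{m+1}{m}}(\Om)$ --- the paper's Claim 1), invoke Theorem \ref{T:existLD1} for each $N$, derive uniform energy bounds by routing the source through the $L^{m+1}/L^{\frac{m+1}{m}}$ duality, Sobolev's embedding $H^{1-\ep}\hookrightarrow L^{6/(1+2\ep)}$, and condition (b) of (A$_f$), absorbing into the damping via $l_m$; then identify the weak limit of $g(u_{N,t})$ by monotonicity (the paper shows the approximations are Cauchy in the energy norm and applies Lemma 1.3 of \cite{B}, which is the same Minty-type device you invoke). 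One slip to flag: in the limit passage the nonlinear term is $f_N(u_N)$, not $f(u_N)$, so besides strong convergence of $u_N$ you need $f_N(u)\to f(u)$ in $L^{\frac{m+1}{m}}(\Om)$ --- the paper's Claim 3 --- but this is harmless and easily added.

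The genuine gap is uniqueness. Your key estimate $|f(u)-f(v)|\le C\left(|u|^{p-1}+|v|^{p-1}\right)|u-v|$ follows already from the bound $|f'(s)|\le C|s|^{p-1}$ in (A$_f$) and makes no essential use of the additional hypothesis $f\in C^2$, $|f''(s)|\le C|s|^{p-2}$; the paper's remark immediately after the theorem states that this $f''$ condition is precisely what uniqueness (and not existence) requires, so an argument that never truly invokes it cannot close. Concretely, for $p\ge 5$ you cannot run a Gronwall estimate on the difference $w=u-v$ by pairing $f(u)-f(v)$ against $w_t$: weak solutions have $w_t$ only in $L^2\cap L^{m+1}$, the product $(f(u)-f(v))\,w_t$ fails the available duality, and the energy identity for the difference of two merely-weak solutions is itself not justified. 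The actual proof in \cite{BL_uniq} integrates by parts in time,
\begin{equation*}
\int_0^t\big(f(u)-f(v),w_t\big)\,ds=\Big[\big(f(u)-f(v),w\big)\Big]_0^t-\int_0^t\big(f'(u)u_t-f'(v)v_t,\,w\big)\,ds,
\end{equation*}
which produces the term $\big((f'(u)-f'(v))v_t,\,w\big)$, controlled exactly by $|f''(s)|\le C|s|^{p-2}$; justifying this manipulation for weak solutions is a substantial part of that paper. So your existence half is correct and matches the paper's approach, but the uniqueness half as written is a sketch that would not close in the super-supercritical range --- which is presumably why the present paper cites \cite{BL_uniq} rather than reproving it.
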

 \begin{remark} The condition $\displaystyle |f''(s)| \leq C |s|^{p-2} $ is needed for the uniqueness, but not for the existence of solutions.
 \end{remark}

 \section{Local in time existence of solutions to the Cauchy problem}

Our main result states:
    \begin{theorem}\label{T:Exist} {\rm (Existence of weak solutions)}
   Let $(u_0, u_1) \in H_0^1(\re^3) \times L^2(\re^3)$ and consider the
Cauchy problem
    \begin{equation}\tag{SW}
   \left\{\begin{array}{l}
    u_{tt}-\Delta u + f(u) +g(u_t) = 0 \text{   a.e. in } \re^{n}
    \times [0,\infty);\\
    u|_{{}_{t=0}} = u_{{}_0};\ \ \ \
    u_{t}|_{{}_{t=0}} = u_{{}_1}.
   \end{array} \right .
  \end{equation}
   where $f$ and $g$ satisfy (A$_f$)-(A$_g$). Then, there exists a time $T>0$ such that (SW) admits a weak solution
on $[0,T]$ in the sense of Definition \ref{ws}. The existence time $T$ depends on the energy norm of the initial data and on the constant $l_m$ given by (A$_g$).
   \end{theorem}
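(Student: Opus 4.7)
The plan is to reduce the Cauchy problem on $\re^3$ to a family of bounded-domain problems, invoke Theorem \ref{t:1}, and glue the pieces together via finite speed of propagation. First, I would approximate the initial data: choose smooth cutoffs $\eta_k \in C_c^\infty(\re^3)$ with $\eta_k \equiv 1$ on $B(0,k)$, $\mathrm{supp}\,\eta_k \subset B(0,2k)$, and $|\nabla \eta_k|$ bounded uniformly in $k$. Setting $(u_0^k, u_1^k) := (\eta_k u_0, \eta_k u_1)$ produces a sequence of compactly supported data that converges to $(u_0, u_1)$ in $H^1(\re^3) \times L^2(\re^3)$, with norms uniformly bounded in $k$.

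Next, fix $T_0 > 0$ and apply Theorem \ref{t:1} on the bounded domain $\Omega_k := B(0, 2k + T_0)$ with data $(u_0^k, u_1^k)$ to obtain a local weak solution $u^k$ on $[0, T_k^M)$. Since the existence time supplied by Theorem \ref{t:1} depends only on the initial energy norm and on $l_m$, and those quantities are uniformly controlled in $k$, there is a common $T \in (0, T_0]$ with $T_k^M \geq T$ for every $k$. I would then extend Theorem \ref{P:finitespeed} to the super-supercritical setting by approximating $f$ by globally Lipschitz truncations $f_N$, applying Theorem \ref{P:finitespeed} at that level, and passing to the limit $N \to \infty$ using the uniform energy estimates coming from the proof of Theorem \ref{t:1}. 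Since for $j \geq k$ one has $(u_0^j, u_1^j) \equiv (u_0^k, u_1^k)$ on $B(0,k)$, the resulting propagation statement forces $u^j = u^k$ on $B(0, k-t) \times [0, \min(k, T)]$; this lets me unambiguously define $u(x,t) := u^k(x,t)$ whenever $k > |x| + T$, producing a function on $\re^3 \times [0, T]$.

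Finally, I would verify that $u$ is a weak solution in the sense of Definition \ref{ws}. Uniform-in-$k$ energy bounds transfer to $u$, giving $u \in C([0,T], H^1(\re^3))$ with $u_t \in L^2 \cap L^{m+1}$ on $\re^3 \times (0, T)$. Given any $\phi \in C_c^\infty(\re^3 \times (-\infty, T))$, its spatial support lies in some ball $B(0,R)$; for $k > R + T$, the distributional identity satisfied by $u^k$ on $\Omega_k$ coincides with the identity required of $u$ in Definition \ref{ws}, because the integrands agree on the support of $\phi$.

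The principal obstacle is extending finite speed of propagation to the super-supercritical regime, since Theorem \ref{P:finitespeed} is stated only for Lipschitz sources and the classical backward-cone flux identity becomes delicate once $f(u) u_t$ ceases to be automatically integrable for $p \geq 5$. The Lipschitz-truncation-plus-limit passage is the nontrivial step; once it is in hand, the bounded-domain theory of Theorem \ref{t:1} combined with the gluing argument closes the proof. A secondary technical point is checking that the existence time in Theorem \ref{t:1} is genuinely monotone in the data norm, so that the uniform lower bound $T>0$ on $T_k^M$ is legitimate.
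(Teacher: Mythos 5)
Your overall architecture --- truncate the source to a Lipschitz nonlinearity, solve on bounded domains with uniformly controlled data, and glue via finite speed of propagation applied at the Lipschitz level --- is exactly the strategy of the paper, and your concentric expanding balls $B(0,k)$ are a reasonable variant of the paper's decomposition (the paper instead cuts the data on a lattice of \emph{small} balls $B(x_j,r)$, with $r$ chosen via equi-integrability so every local snapshot has energy below a fixed bound $K$, and patches the solutions on intersections of backward cones, letting the lattice spacing $d\to 0$; your version gets uniformity in $k$ directly from $|\nabla\eta_k|\leq C$, at the cost of having to trust that the time $T_M$ of Theorem \ref{t:1} carries no hidden dependence on the growing domain $\Omega_k$ --- the paper sidesteps this by keeping all balls of the same fixed radius $r$).

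There is, however, a genuine gap at precisely the step you flag as the crux, and your framing of it ("extend Theorem \ref{P:finitespeed} to the super-supercritical setting") points at the wrong target. You take $u^k$ from Theorem \ref{t:1} as a black box, separately solve the truncated problems with $f_N$, apply Theorem \ref{P:finitespeed} to those, and pass $N\to\infty$ "using the uniform energy estimates." Two obstructions: (i) uniform energy bounds only yield weak convergence of the truncated solutions $u^{k,N}$, and identifying the weak limit as a solution of the untruncated problem requires handling the nonlinear damping --- the paper does this via the monotonicity argument (Lemma 1.3 of \cite{B}) to show $g^*=g(u_t)$, together with an energy identity for differences proving the approximations are Cauchy in $C(0,T;\HH)$; this is the substantive content of the paper's Section 3.1 and does not follow from bounds alone. (ii) Even granted convergence $u^{k,N}\to v^k$ to \emph{some} solution, you need $v^k=u^k$ in order to transfer the finite-speed equalities to your black-box solutions, and that requires uniqueness --- which under (A$_f$) alone is unavailable: Theorem \ref{t:1} needs the additional hypothesis $f\in C^2$, $|f''(s)|\leq C|s|^{p-2}$ for uniqueness when $p>3$ (see the remark following it), and Theorem \ref{T:Exist} does not assume this. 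The paper's resolution is structural: it never uses Theorem \ref{t:1} as a black box, but re-proves bounded-domain existence through the truncations $f_n$ (Claims 1--3 plus Theorem \ref{T:existLD1}), so the bounded-domain solutions are \emph{by construction} limits of Lipschitz-source solutions; finite speed of propagation is then invoked only at the approximation level, where Theorem \ref{P:finitespeed} applies as stated, and the identity $u^{j,n}=u^{l,n}$ on overlaps passes trivially to the limit --- no finite-speed statement for the limiting, non-Lipschitz problem is ever needed. Your proof becomes correct if you replace "apply Theorem \ref{t:1}" by "define $u^k$ as the limit of the $f_N$-approximated solutions," carrying out the a priori estimates, the monotonicity lemma, and the Cauchy-in-energy argument as in the paper.
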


   \begin{proof}[\bf Proof.]

   We identify the following steps:

\subsection{Local existence on bounded domains}
Consider for now the problem (SWB) where $\Omega$ is an open, bounded domain with smooth boundary. First we will solve the existence problem on such a domain; in the second step we will cut the initial data in pieces defined on small domains; finally, we will show how to piece together the solutions defined on these small domains to obtain existence of solutions on the entire space $\re^3$.

\textbf{Approximation of f}:  We consider the following approximation of equation (SWB), with $n \to \infty$ as the parameter of approximation:
\begin{equation}\label{1.2}
\begin{cases}
                                  u^n_{tt} - \Delta u^n + f_n(u^n)+ g(u^n_t)=0$ in $  \Om
                                             \times [0, \infty)\\
                                 u^n = 0 $
                                          on $\partial \Omega  \times [0,\infty)\\
                                  u^n(0)=u_0 \in \HO$ and $u^n_t(0)= u_1 \in \LO.
                       \end{cases}\end{equation}
We construct the approximating functions $f_n$ as follows: let
$\eta$ be a cutoff smooth function such that: $(i) 0 \leq \eta \leq
1$, $(ii) \eta(u) = 1$, if $|u| \leq n$, $(iii) \eta(u) = 0$, if $|u| >
2n$, and $(iv) |\eta'(u)| \leq C/n$. Now construct $f_n:
H^{1-\ep}(\Om) \to L^{\frac{m+1}{m}}(\Om)\ \text{,} \ f_n(u) :=
f(u)\eta(u)$. This means that
\begin{equation}\nonumber f_n(u) =
\begin{cases}
f(u) & \text{,}\ \  |u| \leq n\\
f(u) \eta(u) & \text{,} \ \ n < |u| < 2n.\\
0 & \text{, otherwise}.
\end{cases}
\end{equation}

\textbf{Claim 1}:  $f_n$ is locally Lipschitz from $H^{1-\ep}(\Om) \to
L^{\frac{m+1}{m}}(\Om)$ (uniformly in $n$). In the sequel we will use the notation $\tilde m =\frac{m+1}{m}$. In order to prove the claim, we consider the
following three cases:\\

\textbf{Case 1}: $|u|$ , $|v| \leq n$. Then $|f_n(u) -
f_n(v)|_{\tilde m} = |f(u) - f(v)|_{\tilde m}$ and we already
know that $f$ is locally Lipschitz from $H^{1-\ep}(\Om) \to L^{\tilde
m}(\Om)$.\\

\textbf{Case 2}: $ n \leq |u| \ \text{,} \ |v| \leq 2n$. Then we
have the following computations:
\begin{align}\label{casetwoapprox}
& |f_n(u) - f_n(v)|_{\tilde m} = |f(u)\eta(u) -
f(v)\eta(v)|_{\tilde m} \\ \nonumber & \leq |f(u)\eta(u) -
f(v)\eta(u) + f(v) \eta(u) - f(v)\eta(v)|_{\tilde m} \\ \nonumber &
\leq |f(u) - f(v)|_{\tilde m} + \Big(\int_{\Om} [|f(v)|\eta(u) -
\eta(v)|]^{\tilde m} \ dx \Big)^{m/(m+1)}\\ \nonumber & \leq
|f(u) - f(v)|_{\tilde m} + \Big(\int_{\Om} [|v|^{p-1}|v|
\max_{\xi}|\eta'(\xi)|u-v|]^{\tilde m} \ dx \Big)^{m/(m+1)}
\end{align}

Now using the definition of the cutoff function $\eta$ and the fact
that $|v| \leq 2n$, we can see that $|v| \displaystyle\max_{\xi}|\eta'(\xi) \leq C$ and
thus (\ref{casetwoapprox}) becomes
\begin{align}\label{approximationcasetwo}
& |f_n(u) - f_n(v)|_{\tilde m} \leq |f(u) - f(v)|_{\tilde m} +
\Big(\int_{\Om} |v|^{(p-1)\tilde m}|u-v|^{\tilde m} \ dx
\Big)^{m/(m+1)}.
\end{align}

For the second term on the right side of (\ref{approximationcasetwo}),
we use H\"older's Inequality with $p$ and $p/(p-1)$, the fact that
$p(m+1)/m \leq 6/(1+2\ep)$, and Sobolev's Imbedding $H^{1-\ep}(\Om)
\to L^{\frac{6}{1+2\ep}}(\Om)$ to obtain
\begin{align}
& |f_n(u) - f_n(v)|_{\tilde m} \leq |f(u) - f(v)|_{\tilde m} + C
|v|^{p-1}_{\frac{6}{1+2\ep}}
|u-v|_{\frac{6}{1+2\ep}}\\ \nonumber & \leq |f(u) -
f(v)|_{\tilde m} + C |v|^{p-1}_{H^{1-\ep}(\Om)}
|u-v|_{H^{1-\ep}(\Om)}
\end{align}
which proves that $f_n$ is locally Lipschitz $H^{1-\ep}(\Om) \to
L^{\frac{m+1}{m}}(\Om)$.\\

\textbf{Case 3}: If $|u| \leq n$ and $ n < v \leq 2n$, then we have
\begin{equation}\label{approxcasethree}
\begin{split}
|f_n(u) - &  f_n(v)|_{\tilde m} = |f(u) -
f(v)\eta(v)|_{\tilde m}\\
& \leq |f(u) - f(v)|_{\tilde m} + \Big( \int_{\Om}
|f(v)|1-\eta(v)| \ dx \Big)^{m/(m+1)}.
\end{split}
\end{equation}
In (\ref{approxcasethree}), we can replace $1 = \eta(u)$, since $|u|
\leq n$ and then the calculations follow exactly as in case 2.\\

\textbf{Claim 2:} For each $n$, $f_n$ is Lipschitz from $\HO \to
\LO$. Again, we consider the three cases:\\

\textbf{Case 1}: $|u| \leq n$ and $|v| \leq n$. Then
\begin{align}\label{approxinlo}
 |f_n(u) - f_n(v)|_{\Om} &= |f(u) - f(v)|_{\Om}\\
                                      & \leq \Big(
\int_{\Om} C |u-v|^2 [|u|^{p-1} + |v|^{p-1} + 1]^2 \ dx
\Big)^{1/2}.
\end{align}
Using H\"older's Inequality with 3 and 3/2, the fact that $|u| \leq n$
and $|v| \leq n$ and Sobolev's Imbedding $\HO \to L^6(\Om)$,
(\ref{approxinlo}) becomes
\begin{equation}
|f_n(u) - f_n(v)|_{\Om} \leq C_n |\nabla(u-v)|_{\Om}
\end{equation}

\textbf{Case 2}: $ n < |u| \ \text{, } |v| \leq 2n$. Then we use the
calculations performed in case 2 of Claim 1 and obtain
\begin{equation}
|f_n(u) -  f_n(v)|_{\Om} \leq |f(u) - f(v)|_{\Om} + \Big(
\int_{\Om} C |v|^{2(p-1)} |u-v|^2 \ dx \Big)^{1/2}.
\end{equation}

Now reiterating the strategy used in Case 1, we obtain the desired
result. As before, the case when $|u| \leq n$ and $ n < |v| \leq 2n$
reduces to case 2.\\

\textbf{Claim 3:} $\displaystyle |f_n(u) -
f(u)|_{L^{\frac{m+1}{m}}(\Om)} \to 0$ as $n \to \infty$ for all $u
\in \HO$. This can be easily seen, since $|f_n(u) - f(u)| =
|f(u)||\eta(u)-1|$ shows that $f_n(u) \to f(u)$ a.e. (because $f$ is
continuous and $\eta \to 0$ as $n \to \infty$. Then we also have
that $|f_n(u)| \leq 2 |f(u)|$ and  $f(u) \in
L^{\frac{m+1}{m}}(\Om)$, for $ u \in \HO$. Thus by Lebesgue
Dominated Convergence Theorem, $f_n \to f$ in
$L^{\frac{m+1}{m}}(\Om)$.

Since $g$ and $f_n$ satisfy the assumptions  of Theorem 2.1, then
its result holds true for each $n$ with $T_M(|(u^n_0,u^n_1)|_{\HO \times
\LO}, l_m)$ (with $T_M$ uniform in $n$), i.e for each $n$, there
exists a pair $(u^n(t),u^n_t(t)) \in C(0,T; \HH)$ which solves the approximated
problem (\ref{1.2}). Thus $u^n(t)$ satisfies the following variational
equality: for any $ \phi \in \HO \cap L^{m+1}(\Om)$, we have:
\begin{equation}\label{variational}
\frac{d}{dt}(u^n_t(t), \phi)_{\Om} + (u^n(t), \phi)_{\HO} +(f_n(u^n(t)), \phi)_{\Om}+
(g(u^n_t(t)), \phi)_{\Om})= 0.
\end{equation}

We will prove that this sequence
of solutions $u^n$ has, on a subsequence, an appropriate limit which is a solution to the original problem (SWB).\\

By using the regularity properties of the solutions $u^n$, we apply
the energy identity to the ``n"-problem and obtain that for each $ 0
< T < T_{max}$, we have
\begin{multline}\label{energyidn}
{1 \over 2} \Big(|u^n_t(T)|^2_{\Om}+ |\nabla u^n(T)|^2_{\Om}\Big) + \int_{\Om_T}
f_n(u^n(t))u^n_t(t) dxdt+
\int_{\Om_T}g(u^n_t(t))u^n_t(t) dxdt 
\\ = {1 \over 2}
\Big(|u^n_t(0)|^2_{\Om}+ |\nabla u^n(0)|^2_{\Om}\Big).
\end{multline}

\textbf{A-priori bounds:} Remember the assumptions on $g$ and $f$:
\begin{itemize}
\item $g(s)s \geq l_m |s|^{m+1} \ \text{for} \ |s| \geq 1$
\item $f_n \ \text{is locally Lipschitz:} \ H^1(\Om) \to L^{m+1 \over
m}(\Om)$
\end{itemize}

Going back to (\ref{energyidn}), we estimate the terms involving the
source $f_n$ by using H\"older's Inequality with $\tilde m =
\frac{m+1}{m}$ and $m+1$, followed by Young's Inequality with the
corresponding components. For simplicity, in the following computations we use $u(t)$ instead of $u^n(t)$.
\begin{equation}\label{estimatefn}
\begin{split}
&\int_{\Om_T} f_n(u(t)) u_t(t) dx \leq \int_0^T |f_n(u(t)|_{\tilde m}
\cdot |u_t(t)|_{m+1}dt \\ &\leq \ep_1 \int_0^T
|u_t(t)|^{m+1}_{m+1}dt
+ C_{\ep_1} \int_0^T |f_n(u)|^{\tilde m}_{\tilde m}dt \\ &\leq \ep_1
\int_0^T |u_t(t)|^{m+1}_{m+1}dt + C_{\ep_1} L^{\tilde m}_{f_n}(K)
\int_0^T |\nabla u(t)|^{\tilde m}_{\Om}dt + C_{\ep_1}C_{f_n} T
\end{split}
\end{equation}

Combining (\ref{energyidn}) with (\ref{estimatefn}) and using the
growth conditions imposed on $g$, we obtain:

\begin{align*}
&{1 \over 2} \Big(|u_t(T)|^2_{\Om}+ |\nabla u(T)|^2_{\Om}\Big) + l_m
\int_0^T |u_t(t)|^{m+1}_{m+1}\ dt - C_{g,f} T \\
& \leq {1 \over 2} \Big(|u_t(0)|^2_{\Om}  +
|\nabla u(0)|^2_{\Om}\Big)+ \ep_1 \int_0^T |u_t(t)|^{m+1}_{m+1} \ dt
+ C_{\ep_1}L^{\tilde m}_f(K)\int_0^T |\nabla u(t)|^{\tilde m}_{\Om} \
dt
\end{align*}

Choosing $ \ep_1 < \frac{l_m}{2}$ and since $m
> 1$, we obtain that for all $T <
T_{max}$, we have
\begin{equation}
|u^n_t(T)|^2_{\Om}+ |\nabla u^n(T)|^2_{\Om} \leq
[|u^n_t(0)|^2_{\Om}+ |\nabla u^n(0)|^2_{\Om} + \overline{C} T] \cdot
e^{C_{l_m,K} T},
\end{equation}

where $\overline{C} = C(g, f, \ep_1, m)$ and $C_{l_m} =
C_{\ep_1}L^{\tilde m}_f(K)$.

Also, we have
\begin{equation}\label{utnorms}
\int_0^T |u^n_t(t)|^{m+1}_{L^{m+1}(\Om)} \leq C_{|u_0|_{\HO},|u_1|_{\Om}, T_{max}}.
\end{equation}

From (\ref{utnorms}), combined with the growth assumptions imposed
on the damping $g$, we obtain that
\begin{align}
& \int_{\Om_T}|g(u^n_t(t))|^{\tilde m} dxdt \leq \int_{\Om_T} L^{\tilde
m}_m|u^n_t(t)|^{m+1} \ dxdt \\ \nonumber & = \int_0^T
|u_t(t)|^{m+1}_{L^{m+1}(\Om)}\ dt \leq C_{|\nabla u_0|_{\Om},
|u_1|_{\Om}, T_{max}}
\end{align}

Therefore, on a subsequence we have
$$(u^n,u^n_t) \to (u^n,u^n_t) \ \text{weakly in}\ \HH$$
$$ u^n_t \to u_t \ \text{weakly in }\ L^{m+1}(0,T; \Om)$$
$$ g(u^n_t) \to g^* \ \text{weakly in}\ L^{\frac{m+1}{m}}(0,T;\Om) \ \text{,
for some} \ g^* \in L^{\frac{m+1}{m}}(0,T;\Om).$$

We want to show that $g^* = g(u_t)$. In order to do that, consider
$u^m$ and $u^n$ be the solutions to the approximated problem
corresponding to the parameters $m$ and $n$. For sake of notation,
let $\tilde u(t) =u^n(t) - u^m(t)$ and $\tilde u_t(t) = u^n_t(t) -
u^m_t(t)$. Then from the energy identity we obtain that for any $T <
T_{max}$ we have {\allowdisplaybreaks
\begin{align}\nonumber &\frac{1}{2}|\tilde u_t(T)|^2_{\Om}+\frac{1}{2} |\tilde u(T)|^2_{\HO} + \int_{\Om_T} (f_n(u^n(t)) - f_m(u^m(t)))\tilde
u_t(t) \ dxdt \\\label{energyidmn} &+\int_{\Om_T} (g(u^n_t(t)) - g(u^m_t(t)))\tilde u_t(t) \ dxdt =0 \end{align}}

First we will show that $ \displaystyle \int_{\Om_T} (f_n(u^n(t)) -
f_m(u^m(t)))(\tilde u_t(t)) \ dxdt \to 0$ as $m, n \to \infty$. Recall
that $\tilde m = \frac{m+1}{m}$ and $|\cdot|_{s} =
|\cdot|_{L^{s}(\Om)}$. Using H\"older's Inequality with $\tilde m$ and
$m+1$, we obtain: {\allowdisplaybreaks
\begin{align*} & \int_{\Om_T}[f_n(u^n(t)) -
f_m(u^m(t))]\tilde u_t(t)dx dt\leq \int_{\Om_T} [f_n(u^n(t))-
f_n(u(t))]\tilde u_t(t)dx dt\\
& + \int_{\Om_T} [f_n(u(t))- f(u(t))]\tilde u_t(t) dxdt + \int_{\Om_T}
[f(u(t))-f_m(u(t))]\tilde u_t(t)dx dt\\
& + \int_{\Om_T} [f_m(u(t))- f_m(u^m(t))]\tilde u_t(t)dx dt \leq
\int_0^T |f_n(u^n(t)) - f_n(u(t))|_{\tilde m} |\tilde u_t(t)|_{m+1}dt\\
& + \int_0^T |f_n(u(t)) - f(u(t))|_{\tilde m} |\tilde
u_t(t)|_{m+1}dt + \int_0^T |f(u(t)) - f_m(u(t))|_{\tilde m} |\tilde u_t(t)|_{m+1}dt\\
& + \int_0^T |f_m(u(t)) - f_m(u_m(t))|_{\tilde m} |\tilde
u_t(t)|_{m+1}dt
\end{align*}}

Now we use the fact that $f$ is locally Lipschitz $H^{1-\ep}(\Om)
\to L^{\tilde m}(\Om)$ and obtain: {\allowdisplaybreaks
\begin{align}
\nonumber & \int_{\Om_T}[f_n(u^n(t)) - f_m(u^m(t))]\tilde u_t(t)dxdt
\leq \int_0^T L(K) |u^n(t) - u(t)|_{H^{1-\ep}(\Om)} |\tilde u_t(t)|_{m+1}dt\\
\nonumber & + \int_0^T |f_n(u(t)) - f(u(t))|_{\tilde m} |\tilde
u_t(t)|_{m+1}dt
 + \int_0^T |f(u(t)) - f_m(u(t))|_{\tilde m} |\tilde u_t(t)|_{m+1}dt\\
\label{abc} & + \int_0^T L(K) |u^m(t) - u(t)|_{H^{1-\ep}(\Om)}
|\tilde u_t(t)|_{m+1}dt.
\end{align}}

We know that $u^n(t) \to u(t)$ weakly in $\HO$ and since the
embedding $H^{1-\ep}(\Om) \subset \HO$ is compact, we get that
$u^n(t) \to u(t)$ strongly in $H^{1-\ep}(\Om)$. We also know that
$\displaystyle |f_n(u) - f(u)|_{\tilde m} \to 0$ as $n \to \infty$
(and same for $m$) and that $|u^n_t(t) - u^m_t(t)|_{m+1, \Om} \leq
C$ for $t < T_{max}$. Thus from (\ref{abc}) we obtain the desired
result
$$ \displaystyle \int_{\Om_T} [f_n(u^n(t)) - f_m(u^m(t))]\tilde u_t(t) \ dxdt \to 0 \ \text{as} \
m, n \to \infty.$$

Now we let $m,n \to \infty$ in (\ref{energyidmn}) and
remembering that $g$ is monotone, we obtain:
\begin{equation}
\lim_{m,n \to \infty} \Big[ |u^n(T) - u^m(T)|^2_{\HO} + |u^n_t(T) -
u^m_t(T)|^2_{\Om} \Big] = 0
\end{equation}
and
\begin{equation}
\lim_{m,n \to \infty} \int_{\Om_T} [g(u^n_t(t)) - g(u^m_t(t))]\tilde
u_t(t) \ dx = 0.
\end{equation}

Since now we know that $u^n_t \to u_t$ weakly in $L^{m+1}(0, T;
\Om)$ and $g(u^n_t) \to g^*$ weakly in $L^{\frac{m+1}{m}}(0,T;\Om)$,
and we also showed that $\limsup_{m,n \to \infty} (g(u^n_t) -
g(u^m_t), u^n_t - u^m_t) \leq 0$, then by Lemma 1.3 (p.42) in
\cite{B}, we obtain that $g^* = g(u_t)$ and $(g(u^n_t),
u^n_t)_{\Om} \to (g(u_t), u_t)_{\Om}$.

Since $\displaystyle |f_n(u) - f(u)|_{\tilde m} \to 0$ as $n \to
\infty$, it follows that $f_n(u^n(t)) \to f(u(t))$ in $L^{\tilde
m}(\Om)$, as $u^n \to u$ weakly in $\HO$.

We are now in the position to pass to the limit in
(\ref{variational}) and obtain the desired equality on bounded domains.

  \subsection{Cutting the initial data}

  Consider now a pair of initial data $ (u_0, u_1)\in H^1(\re^3) \times L^2(\re^3)$ and let $K$ be an upper bound on the energy norm of the initial data, more precisely take $K$ such that
\begin{equation}
|\nabla u_0|_{\re^3}+|u_1|_{\re^3} <K.
\end{equation}

 We find $r$ such that
     \begin{align}\label{whatsr}
     &|\nabla {u}_0|_{B(x_0,r)}<\frac{K}{4},\quad  |u_1|_{B(x_0,r)}<\frac{K}{4},\\
   \notag    &    2(C^* \omega_3)^{\frac{1}{3}}\left(|\nabla
                 {u}_0|_{B(x_0,r)}+|{u}_0|_{B(x_0,r)}\right)
                 < \frac{K}{4},
    \end{align}
    where $  \omega_3$ is the volume of the unit ball in $\re^3$ and $C^*$ is the constant from the Sobolev inequality (which does not depend on $x_0$ nor $r$). It can be easily shown that the above inequalities are satisfied by $ r$ chosen such that
    \begin{align}\label{whatsr1}
   & |{u}_0|_{B(x_0, r)}<\displaystyle \frac{K}{8(C^*  \omega_3)^{\frac{1}{3}}}, \quad |u_1|_{B(x_0,r)}<\frac{K}{4},\\ \nonumber &  |\nabla {u}_0|_{B(x_0, r)}<\min \left\{\displaystyle
       \frac{K}{4}, \displaystyle \frac{K}{8(C^*  \omega_3)^{\frac{1}{3}}}\right\}.
      \end{align}

    The fact that $ r$ can be chosen independently of $x_0$ is motivated by the
    equi-integrability of the functions ${u}_0, \nabla {u}_0, {u}_1$. For each of the functions
${u}_0,\nabla {u}_0, {u}_1$ we apply the following result of classical analysis:\\

    {\it If $f\in L^1(A)$, with $A$ a measurable set, then for every given
$\varepsilon >0$, there exists a number $\delta>0$ such that $\int_{E}| f(x)| dx
<\varepsilon$, for every measurable set $E \subset A$ of measure less than
$\delta$ (see \cite{EG}).} \\

Note that $\delta$ in the above result does not depend on $E$, hence $ r$ does not vary with $x_0$.

From Theorem \ref{t:1} it follows that the solution exists up to time $T(K)$ (which depends on $K$, but it does not depend on $x_0$) on all balls $B(x_0,r), \, x_0\in \re^3$, provided that $u_0\in H_0^1(B(x_0,r))$. In order to obtain that $u_0$ has zero trace on $\partial B(x_0,r)$ we multiply it by a smooth cut-off function $\theta$ such that
\[
\theta(x)=\begin{cases} 1, \quad |x-x_0|\leq  r/2\\
0, \quad |x-x_0|\geq  r
\end{cases}
\]
and
\begin{equation}\label{theta}
 |\theta|_{\infty, B(x_0, r)} \leq 1, \quad
      |\nabla \theta|_{\infty, B(x_0, r)} \leq
\frac{2}{ r}.
\end{equation}
Such $\theta$ can be obtained from a mollification which approximates the Lipschitz function
\[
\theta_0(x)=\begin{cases} 1, \quad |x-x_0|\leq  r/2\\
   2-2 |x-x_0| / r, \quad  r/2\leq |x-x_0|\leq  r\\
0, \quad |x-x_0|\geq  r.
\end{cases}
\]
 We denote by
  \[
  u^{x_0}_0=\theta u_0, \quad u^{x_0}_1=u_1,
  \]
   and by $u^{x_0}$ the solution generated by $(u^{x_0}_0, u^{x_0}_1)$.
  In order to show that
  \begin{equation}\label{small}
|\nabla u_0^{x_0}|_{B(x_0)}+|u_1^{x_0}|_{B(x_0)} <K
\end{equation}
   we start with the following estimate
     \[
      |\nabla u^{x_0}_0|_{2,B(x_0, r)} \leq |\theta|_{\infty, B(x_0, r)}
      |\nabla u_0|_{2,B(x_0, r)} +
      |\nabla \theta|_{\infty,
B(x_0, r)}|u_0|_{2,B(x_0, r)}.
    \]
  By $(\ref{theta}), (\ref{whatsr})$, H\"older's inequality, followed by Sobolev's inequality we conclude that:
    \begin{multline*}
      |\nabla u^{x_0}_0|_{2,B(x_0, r)} < \frac{ K}{4} + |B(x_0, r)|^
      {\frac{1}{3}} \frac{2}{ r}|u_0|_{6,B(x_0, r)}  \\ \leq  \frac{ K}{4} + 2(C^* \omega
_3)^{\frac{1}{3}}\left(|\nabla
      u_0|_{2,B(x_0, r)}+|u_0|_{2,B(x_0, r)}\right)
      \stackrel{(\ref{whatsr})}{\leq} \frac{ K}{4} + \frac{ K}
      {4}= \frac{ K}{2}.
\end{multline*}

Thus we showed that the pair $(u^{x_0}_0, u^{x_0}_1)$ satisfies (\ref{small}).

 \subsection{Patching the small solutions} The key argument that we use in order to construct the solution to the Cauchy problem from the ``partial" solutions to the boundary value problems set on the balls $B(x_0,r)$ constructed in section 3.2 uses an idea due to Crandall and Tartar. They first used this type of argument to obtain global existence of solutions for a Broadwell model with {\it arbitrarily large} initial data starting from solutions with small data (see \cite{T}). Subsequently, the second author has recast it in the framework of semilinear wave equations and showed local existence of solutions for (SW) on the entire space $\re^3$ (see \cite{PR1}); the argument may also be employed on bounded domains as it was done in \cite{PR2}. 
 
 \vspace*{.03in}
\noindent{\bf Step 1. Construction of partial solutions.} Consider a lattice of points in $\re^3$ denoted by $x_j$ situated at distance $d>0$ from each other, such that in every ball of radius $d$ we find at least one $x_j$. Next construct the balls $B_j:=B(x_j,r/2)$, where $r$ is given by (\ref{whatsr}) and inside each $B_j$ take a snapshot of the initial data. More precisely, construct $(u_0^{x_j},u_1^{x_j})$ by the procedure used in subsection 3.2. On each of the balls $B(x_j,r)$ we use theorem \ref{T:existLD1} for the approximated problem given by the system (\ref{1.2}) to obtain existence of solutions $u^{x_j,n}$ up to a time $T(K)$ independent of $x_j$ and of $n$. These solutions will satisfy the estimate (\ref{energyidn}) on $B(x_j,r+T(K))$. Following the arguments from section 3.1 we pass to the limit in the sequence of approximations $u^{x_j,n}$ on each of the balls $B_j$ and obtain a solution $u^{x_j}$.

   \vspace*{.03in}
 \noindent{\bf Step 2. Patching the small solutions.}  For $j\in \mathbb{N}$ let
  \[
   C_j:=\{ (y,s) \in \re^3 \times [0, \infty); |y-x_j| \leq r/2-s\}
 \]
 be the backward cones which have their vertices at $(x_j,r/2)$. For
$d$ small enough (i.e. for $0<d<r/2$) any two neighboring cones $C_j$
and $C_l$ will intersect. 

\vspace*{1.0in}
    \setlength{\unitlength}{.5in}
  \begin{center}
  \hspace{-2.8in}
 \begin{picture}(1,1)
 \put(0,0){\vector(1,0){7}}
 \put(0,1.25){\line(1,0){3.74}}
 \put(0,2){\line(1,0){7}}
 \put(0,0){\vector(0,1){3}}
 \put(3,0){\line(0,1){2}}
 \put(3.75,0){\line(0,1){1.25}}
 \put(4.5,0){\line(0,1){2}}
 \put(3.1,2.2){\vector(1,0){1.45}}
  \put(4,2.5){\makebox(0,0){$d$}}
  \put(1,-0.4){\vector(1,0){2}}
   \put(3.1,2.2){\vector(-1,0){.1}}
    \put(2.1,-0.6){\makebox(0,0){$r/2$}}
  \put(1.1,-.4){\vector(-1,0){.1}}
   \put(-0.2,3){\makebox(0,0){$t$}}
      \put(7,-0.2){\makebox(0,0){$x$}}
    \put(3.5,.25){\makebox(0,0){$I_{j,l}$}}
    \put(1.7,.25){\makebox(0,0){$C_{j}$}}
    \put(5.8,.25){\makebox(0,0){$C_{l}$}}
 \put(3.1,-0.2){\makebox(0,0){$x_j$}}
  \put(4.55,-0.2){\makebox(0,0){$x_l$}}
 \put(-0.7,1.25){\makebox(0,0){$(r-d)/2$}}
   \put(-0.4,2){\makebox(0,0){$r/2$}}
 \thicklines
 \put(1,0){\line(1,1){2}}
  \put(3,2){\line(1,-1){2}}
 \put(2.5,0){\line(1,1){2}}
  \put(4.5,2){\line(1,-1){2}}
\end{picture}\\[.4in]
\begin{center}
  The intersection of the cones $C_j$ and $C_l$\\
   \end{center}
 \end{center}
 
For every set of intersection
 \[
 I_{j,l}: = C_j \cap C_{l}
 \]
 the maximum value for time contained in it is equal to $(r-d)/2$
(see figure above).
 For $t<r/2$ we define the piecewise function:
 \begin{equation}\label{D:soln}
   u(x,t):=u^{x_j}(x,t), \text{ if } (x,t)\in C_j.
 \end{equation}
 This solution is defined only up to time $(r-d)/2$, since the cones
do not cover the entire strip $\re^3 \times (0, r/2)$. By letting
$d\to 0$ we can obtain a solution well defined up to time $r/2$.
 Thus, we have $u$ defined up to time $r/2$, which is the height of
all cones $C_j$. Every pair $(x, t) \in \re^3 \times (0,r/2)$ belongs
to at least one $C_j$, so in order to show that this function from
(\ref{D:soln}) is well defined, we need to check that it is single-valued
on the intersection of two cones. Also, we need to show that the above
function is the solution generated by the pair of initial data $(u_0,
u_1)$. Both proofs will be done in the next step.

 \vspace*{.03in}
\noindent{\bf Step 3. The solution given by (\ref{D:soln}) is valid}. To show that $u$ defined by (\ref{D:soln}) is a proper function, we
use the same result of uniqueness given by the finite speed of propagation.  First note that for $n\geq 3$ the intersection $I_{j,l}$ is not a cone,
but it is contained by the cone $C_{ j,l}$ with the vertex at
$((x_j+x_{l})/2,( r -d)/2)$ of height $( r -d)/2$.   In this cone we
use the uniqueness asserted by the finite speed of propagation as follows.
Recall that the approximations $f_n$ are Lipschitz inside the balls $\{B(x_j,r)\}_{j\in \re^3}$, hence the finite speed property holds for the solutions $u^{j,n}$.
First note that the cones $C_{ j,l}$ contain the sets $I_{ j,l}$, but
$C_{ j,l} \subset C_j \cup C_l$. In $C_j$ and $C_l$ we have the two
solutions $u^{j,n}$ and $u^{l,n}$ hence, in $C_{ j,l}$ we now have defined two
functions which can pose as solutions. Since  $u^{j,n}$ and $u^{l,n}$ start with the same
initial data ($(u_{0}^j, u_{1}^j)=(u_0, u_1)=(u_{0}^l, u_{1}^l)$
on $B_j \cap B_l$), hence they are equal in $C_{ j,l}$; since $I_{ j,l} \subset C_{ j,l}$ we
proved $u^{j,n}=u^{l,n}$ on $I_{j,l}$. By letting $n\to \infty$ we get $u^j=u^l$ on $I_{ j,l}$. Therefore, $u$ is a single-valued (proper) function.

    Finally, the fact that this constructed function $u$ is a solution to
the Cauchy problem (SW) is immediate since it satisfies both, the wave
equation and the initial conditions.
\begin{remark} (\textbf{on global existence}) The above method of using cutoff functions and
    ``patching" solutions based on the finite speed of propagation property
    will work the same way in the case when we have global existence on bounded domains.
    Since we can choose the
height of the the cones as large as we wish the solutions exist globally in time.
\end{remark}

\begin{remark}(\textbf{on uniqueness})
 In \cite{BL_uniq} the authors showed under the same assumptions (A$_f$), (A$_g$) that the boundary value problem (SWB) admits a unique solution (in fact, the result was shown in the presence of damping and source terms in the interior {\it and} on the boundary. The methods employed here seem to preclude us from obtaining a corresponding result for the Cauchy problem (SW) since they are obtained by passing to the limit in sequences of approximations.
\end{remark}
 \end{proof}



\begin{thebibliography}{22}
\bibitem{B} \newblock Viorel Barbu, \newblock ``Nonlinear semigroups and differential
            equations in Banach spaces'', \newblock Editura Academiei, \newblock Bucuresti
            Romania and Noordhoff International Publishing, Leyden Netherlands (1976).

\bibitem{BLR}
\newblock V.~Barbu, I.~Lasiecka, and M.~A.~Rammaha,
\newblock \emph{On nonlinear wave equations with degenerate damping and source terms}, \newblock Trans. Amer. Math. Soc., \textbf{357}  (2005), 2571--2611.

\bibitem{B_exist}
\newblock L.~Bociu
\newblock \emph{Local and Global Wellposedness of Weak  Solutions for the Wave Equation
with  Nonlinear  Boundary and Interior Sources of Supercritical
Exponents and Damping}
\newblock Submitted for publication to Nonlinear Analysis, Theory,
Methods and Applications, 2008

\bibitem{BL_uniq}
\newblock L.~Bociu and I.~Lasiecka,
\newblock \emph{Uniqueness  of Weak Solutions for
the Semilinear Wave Equations with  Supercritical Boundary/Interior
Sources and Damping}
\newblock, To appear in \emph{Discrete and
Continuous Dynamical Systems A}, 2008

\bibitem{BL_blowup}
\newblock L.Bociu and I.Lasiecka,
\newblock \emph{Blow-up of Weak  Solutions for the Semilinear Wave Equations with
Nonlinear  Boundary and Interior  Sources and Damping}
\newblock Accepted for publication in \emph{Applicationes Mathematicae}, 2008

\bibitem{EG} \newblock L.~C.~Evans and  R.~F.~Gariepy,
 ``Measure Theory and Fine Properties of Functions," \newblock Studies in Advanced
            Mathematics, (1992).

\bibitem{fer}
\newblock E. Feireisl,
\newblock \emph{Global attractors for semilinear damped wave equations with supercritical
exponent},
\newblock Journal of Differential Equations,
\textbf{116} (1995), pp. 431 - 447.

 \bibitem{GT} \newblock V.~Georgiev and G.~Todorova, \newblock \emph{Existence of a
            solution of the wave equation with nonlinear damping and source
            terms}, \newblock J.Differential Equations, \textbf{109},
            (1994), 295--308.

\bibitem{Gl1}
\newblock R.~T.~Glassey, \newblock \emph{Blow-up theorems for nonlinear wave
            equations}, \newblock Math.Z., \textbf{132}  (1973), 183--203.

\bibitem{Jo}\newblock F.~John, \newblock\emph{ Blow-up of solutions of nonlinear wave
            equations in three space dimensions}, \newblock Manuscripta Math.,
            \textbf{28} (1979), 235--268.

\bibitem{LT} \newblock I.~Lasiecka and D.~Tataru, \newblock\emph{Uniform boundary stabilization
of semilinear wave equations with nonlinear boundary damping},
              \newblock Differential and Integral Equations, vol.6, no.3, (1993),
507--533.

\bibitem{LTo} \newblock I.~Lasiecka and D.~Toundykov, \newblock\emph{Energy decay rates for the semilinear wave equation with nonlinear localized damping and source terms}, \newblock Nonlinear Analysis, \textbf{64} (2006), no. 8, 1757--1797

\bibitem{LS} \newblock J.-L.~Lions and W.~A.~Strauss, \newblock \emph{On some
            nonlinear evolution equations}, \newblock Bull. Soc.Math. France,
            \textbf{93} (1965), 43--96.


\bibitem{PR1}\newblock P.~Radu, \newblock\emph{ Weak Solutions to the Cauchy Problem of a Semilinear Wave Equation with Damping and Source Terms}, \newblock Advanced Differential Equations  (2005)

\bibitem{PR2} \newblock P.~Radu, \newblock\emph{ Weak Solutions to the Initial Boundary Value Problem of a Semilinear Wave Equation with Damping and Source Terms}, to appear in {Applicationes Mathematicae}

\bibitem{Sch}\newblock J.~Schaeffer, \newblock \emph{ The equation $u_{tt}-\Delta u=|u|^p$ for the critical value of $p$},  \newblock Proc. Roy. Soc. Edinburgh
Sect. A, \textbf{101A} (1985), 31--44.

\bibitem{STV} \newblock J.~Serrin, G.~Todorova, and E.~Vitillaro, \newblock
            \emph{ Existence for a nonlinear wave equation with damping and
            source terms},  \newblock Differential Integral Equations, \textbf{16}
(2003), no. 1, 13--50.

\bibitem{T} \newblock L.~Tartar, \newblock\emph{ Existence globale pour un syst\`eme
            hyperbolique semi lin\'eaire de la th\'eorie cin\'etique des gaz},
           \newblock S\'eminaire Goulaouic-Schwartz
           (1975/1976), \newblock Equations aux d\'eriv\'ees partielles et
           analyse fonctionnelle, Exp. No. 1, 11 pp.  \newblock Centre Math.,
           \'Ecole Polytech.,  Palaiseau, 1976.

\bibitem{T1}\newblock L.~Tartar, `` Topics in nonlinear analysis'',
            Publications Math\'ematiques d'Orsay, \textbf{78, 13},
            Universit\'e de Paris-Sud, D\'epartement de Math\'ematiques,
            Orsay, 1978, ii+271 pp.


\end{thebibliography}
       \end{document}